\journal{xx}
\newtheorem{theorem}{Theorem}[section]
\newtheorem{proposition}[theorem]{Proposition}
\newtheorem{lemma}[theorem]{Lemma}
\newtheorem{corollary}[theorem]{Corollary}
\newtheorem{remark}[theorem]{Remark}
\newtheorem*{criterion theorem}{Criterion Theorem}
\newcommand{\Z}{\ensuremath{\mathbb{Z}}}
\newcommand{\fhe}[0]{\ensuremath{{\scriptstyle\circ}}}
\newcommand{\Pn}{\ensuremath{P^{2k}(n)}}
\begin{document}

\begin{frontmatter}

\title{Homotopy  classification of   $S^{2k-1}$-bundles over $S^{2k}$}

\author[1]{Zhongjian Zhu}
\ead{20160118@wzu.edu.cn}
\author[2]{Jianzhong Pan\corref{cor}}
\ead{pjz@amss.ac.cn}
\cortext[cor]{Corresponding author}
\address[1]{College of Mathematics and Physics, Wenzhou University, Wenzhou 325035, China}
\address[2]{Hua Loo-Keng Key Mathematical Laboratory, Institute of Mathematics,\\ Academy of Mathematics and Systems Science,
	Chinese Academy of Sciences; \\University of Chinese Academy of Sciences, Beijing,  100190, China}

\begin{abstract}
	In this paper,  we  classify the homotopy types of the total spaces of $S^{2k-1}$-bundles (or fibrations)  over $S^{2k}$ for  $2\leq  k\leq 6$. One of the  key ingredients in the argument  is the  new  necessary and sufficient conditions for a CW complex to be  homotopy equivalent to the total space of a sphere bundle (fibration).  When $k=4$, the classification results  provide a negative answer to  the conjecture in \cite{S8bundles}.  
\end{abstract}

\begin{keyword}
fibration\sep homotopy type\sep Spectral sequence\sep Moore space

\MSC 55R15\sep55R20\sep55P15\sep 55Q52
\end{keyword}
\end{frontmatter}

\section{Introduction}
\label{intro}
 This is the second paper in a series that we are devoting to the homotopy theory of sphere fibrations. 
Motivated by  Kitchloo and Shankar's criterion in \cite[Theorem 1]{S3S4bundle} for determining whether a given CW complex is homotopy equivalent to  $S^{3}$-fibration over $S^{4}$, in our previous work \cite{ZhuPanfiber}, we gave necessary and sufficient conditions for a CW complex to have the homotopy type of the total space of an $S^{2k-1}$-fibration over $S^{2k}$ for any $k\geq 2$. There is the canonical homotopy cofibration 
\[S^{k} \xrightarrow{[n]} S^{k} \xrightarrow{i_k} P^{k+1}(n)\xrightarrow{p_{k+1}} S^{k+1},\] 
where $[n]$  denotes the map of degree $n$.
It induces an exact sequence of the homotopy groups for the pair $(\Pn, S^{2k-1})$
\begin{align}
	\xymatrix{
		\pi_{4k-2}(S^{2k-1})	 \ar[r]^-{i_{2k-1\ast}}  &	\pi_{4k-2}(\Pn)	 \ar[r]^-{j_\ast}  & \pi_{4k-2}(\Pn,S^{2k-1})\ar[r]^-{\partial} & \pi_{4k-3}(S^{2k-1}).  } \label{exact1 pi(P S)}
\end{align}

Let $K^n_k=Ker(p_{2k\ast}:\pi_{4k-2}(\Pn)\to \pi_{4k-2}(S^{2k}))$.

By Lemma (1) of \cite{SASAO2},  (\ref{exact1 pi(P S)}) deduces the short exact sequence
\begin{align}
	0 \to  \pi_{4k-2}( S^{2k-1})/n \pi_{4k-2}( S^{2k-1}) \overset{i_{2k-1\ast}}{\to}  K^n_k \overset{j_\ast}{\to} j_{\ast}(K^n_k) \to 0 \label{exact: K}
\end{align}
with  $ j_{\ast}(K^n_k)$ a cyclic group generated by  $[X_{2k},\iota_{2k-1}]_r$ for $2|n$ or $k=2,4$ by \cite[Lemma 2.2]{ZhuPanfiber}, where $X_{2k}$ is  a fixed  generator of $\pi_{2k}(\Pn, S^{2k-1})=\Z\{X_{2k}\}$ and $[X_{2k},\iota_{2k-1}]_r$ is the relative Whitehead product (defined in \cite{Blakers}) of  $X_{2k}$  and $\iota_{2k-1}$, the homotopy class of the identity map on the sphere $S^{2k-1}$. We denote the composition $f\fhe g$ of two maps simply by $fg$.  \cite[Corollay 1.2 and Theorem 1.4]{ZhuPanfiber} gives the following  theorem
\begin{criterion theorem}\label{Criterion theorem}
		Let $n\geq 2$ and  $\theta_{k}^n\in\pi_{4k-2}(\Pn)$  be any fixed lift of $[X_{2k},\iota_{2k-1}]_r$ by the map  $j_{\ast}$.  Then a CW-complex $X$ is homotopy equivalent to the total space of an $S^{2k-1}$-fibration over $S^{2k}$   if and only if  
		\begin{itemize}
			\item $2\mid n$ if $k\ne 2,4$ and 
			\item $X\simeq \Pn\cup_{f}e^{4k-1}$, such that   $f=\theta_{k}^n + i_{2k-1} \gamma$ with $\gamma\in \pi_{4k-2}(S^{2k-1})$. 
		\end{itemize}
\end{criterion theorem}
 
Note that for $n=1$, any such $X=\Pn\cup_{f}e^{4k-1}$ is homotopy equivalent to $S^{4k-1}$.  For $n=0$,   $X=\Pn\cup_{f}e^{4k-1}=(S^{2k-1}\vee S^{2k})\cup_{f}e^{4k-1}$ and the corresponding sphere fibration has a section.  The  homotopy classification of  the total spaces of such sphere fibrations was considered by Sasao  \cite{SASAO3}.   In this paper we classify  up to  homotopy the total spaces of  $S^{2k-1}$-fibrations over $S^{2k}$ for  $2\leq k\leq 6$ and $n\geq 2$.

 To state our main results, we need the following notations. 
 
Let $G_k^n$ be the number of the homotopy types of  the  total spaces of  $S^{2k-1}$-fibrations over $S^{2k}$.

 $n=2^rp_1^{e_1}p_2^{e_2}\cdots p_s^{e_s}$ denotes the prime factorization of $n$. Here $r\geq 0$, $p_1<p_2<\cdots< p_s$ are odd primes, and $e_1,e_2,\cdots, e_s$ are positive integers. If $r=0$ or $1$ and  $p_i\equiv 1$ (mod $4$) for all $i=1,\cdots, s$, then we say that $n$ satisfies $\bigstar$.

  In order to list the homotopy types of the total spaces of $S^{2k-1}$-fibrations over $S^{2k}$, the following homotopy groups $\pi_{4k-2}( S^{2k-1})$   are needed:
\newline
$\pi_{6}(S^3)\cong \Z_{12}$, $\pi_{14}(S^7)\cong\Z_{120}$,   $ \pi_{10}(S^5)=\Z_{2}\{\nu_5\eta_8^2\}$;  $ \pi_{18}(S^9)=\Z_{2}\{\nu_9^3\} \oplus \Z_{2}\{\mu_9\}\oplus \Z_{2}\{\eta_9\varepsilon_{10}\}\oplus \Z_{2}\{\sigma_9\eta^2_{16}\}$ and 
$ \pi_{22}(S^{11})=\Z_{8}\{\zeta_{11}\} \oplus \Z_{9}\{\alpha_{3}^{11}\}\oplus \Z_{7}\{\alpha_{1}^{11}\}$,
 where $\alpha_{3}^{11}:=\Sigma^8 \alpha_{3}(3)$ and   $\alpha_{1}^{11}:=\Sigma^8 \alpha_{1}(3)$ are the generators of the corresponding odd primary components of $\pi_{4k-2}(S^{2k-1})$ and the others are the generators of the $2$-primary components given in \cite{Toda}. 
 
For an integer $s$, let $\rho_s^n=1$ or $0$ according as $s|n$ or $s\nmid n$.  Let $S_0=\Z_{2}\{\nu_9^3\} \oplus \Z_{2}\{\mu_9\}\oplus \Z_{2}\{\eta_9\varepsilon_{10}\}\subset \pi_{18}(S^9)$.

 \qquad

\textbf{A note on Orientation.}  Let $H_{\ast}(Y)$ denote the integral homology groups of space $Y$. 
Fix a generator $X_{2k}\in \pi_{2k}(\Pn,S^{2k-1})$ and a generator $\iota\in H_{4k-1}(S^{4k-1})$.  If $S^{2k-1}\rightarrow X\xrightarrow{\pi}S^{2k}$ is a sphere fibration, then $X$ is a Poincare duality complex.  By above \textbf{Criterion Theorem}, $X$ has a cell 
decomposition  
$$X=\Pn\cup_{f}e^{4k-1}~\text{and}~  j_{\ast}(f)= [X_{2k},\iota_{2k-1}]_r.$$
Denote by $q:X \to S^{4k-1}$ the natural quotient map.
An orientation class
\begin{align} 
\sigma_f\in H_{4k-1}(X) ~\text{is said to be $\iota$-compatible if}~q_{\ast}(\sigma_f)=\iota.  \label{Def:Orien of X}
\end{align}
 When $k=2, 4$, it is well known that any  $S^{2k-1}$-fibration over $S^{2k}$ is fiber homotopy  equivalent to an $S^{2k-1}$-bundle  over $S^{2k}$.  In this case the total space is a manifold denoted by $M_{m,n}^{4k-1}$ in the following.    By Lemma \ref{Lem:orien of M},   the orientation $\sigma^{4k-1}_{m,n}$ of $M_{m,n}^{4k-1}$,  defined by the orientations of base space $S^{2k}$ and fibre $S^{2k-1}$,  is  $\iota$-compatible for some  $\iota$ independent of $m,n$.

 \qquad

The isomorphism classes of  $S^{2k-1}$-bundles  over $S^{2k}$ for $k=2,4$ are classified by the characteristic maps in  $\pi_{2k-1}(SO(2k))\cong \Z\oplus \Z$.  
The fibration sequence  
$SO(2k-1)\xrightarrow{ \mathbf{i}}SO(2k)\xrightarrow{\mathbf{ p}} S^{2k-1}$  induces a short exact sequence 
\begin{align*}
	0\!\rightarrow\! \Z\{\bar \rho_{2k-1}\}\!=\! \pi_{2k-1}(SO(2k\!-\!1))\xrightarrow{\mathbf{i}_{\ast}}\pi_{2k\!-\!1}(SO(2k))\xrightarrow{\mathbf{p}_{\ast}} \pi_{2k-1}(S^{2k-1})\!=\!\Z\{\iota_{2k-1}\}\!\rightarrow\! 0. 
\end{align*}
with $\pi_{2k-1}(SO(2k))=\Z\{\bar \rho_{2k}\}\oplus \Z\{\bar\sigma_{2k}\}$, where the generators  $\bar \rho_{2k-1}$,  $\bar \rho_{2k}$ and $\bar\sigma_{2k}$ are given by using the quaternion and octonion  multiplication in $S^{3}\subset \mathbb{H}$ and $S^7\subset \mathbb{O}$ \cite[(2.1)]{Tamura}:
\begin{align*}
	&\bar \rho_{2k-1}: S^{2k-1}\rightarrow SO(2k-1)~~\text{with}~ \bar\rho_{2k-1}(x)(y):=xyx^{-1}; \\
	& \bar \rho_{2k}:=\mathbf{i}_{\ast}(\bar \rho_{2k-1});~~~~~~\bar \sigma_{2k}:  S^{2k-1}\rightarrow SO(2k) ~~\text{with}~ \bar \sigma_{2k}(x)(y):=xy.
\end{align*}
 We simplify  the elements  $m\bar \rho_{2k-1}+n\bar\sigma_{2k}$ of $\pi_{2k-1}(SO(2k))$ by  ordered pairs of integers, $[m,n]$.   Denote the   sphere bundle  corresponding to $[m,n ]$ by \[S^{2k-1}\rightarrow M^{4k-1}_{m,n}\xrightarrow{\pi} S^{2k}.\] 

By the surgery theory of manifolds,  Crowley and Escher \cite{Crowley} gave a classification of the total spaces of $S^3$-bundles over $S^4$ up to homotopy equivalence, homeomorphism and diffeomorphism. The diffeomorphism classification of the total spaces of $S^7$-bundles over $S^8$ was obtained by  M. Grey \cite{S8bundles}. Partial classification up to homotopy and homeomorphism of the total spaces of $S^7$-bundles over $S^8$ was also obtained by  M. Grey \cite{S8bundles} and Ajay,Tibor \cite{S7bundlesAjay}  .

 Using the  relationship between the characteristic maps  of  $S^{2k-1}$-bundles over $S^{2k}$ 
and the attaching maps of their total spaces for $k=2,4$ in \cite{JamesII},  we  classify up to  homotopy  the total spaces of such bundles  via homotopy-theoretic methods ( Theorem \ref{thm:S3S4 bundle}, \ref{thm:S7S8 bundle}).  Theorem \ref{thm:S3S4 bundle} agrees with the result  given by Crowley in \cite[Theorem 1.1]{Crowley}.  Theorem \ref{thm:S7S8 bundle}  provides an answer to a conjecture  in  \cite{S8bundles}.

Let  $p^r|| n$ mean $p^r | n$ and $p^{r+1}\nmid n$ for a prime number $p$ and	$(a,b)$ be the greatest common divisor of integers $a$ and $b$.

\begin{theorem}\label{thm:S3S4 bundle}~~Let $n,n'\geq 2$.

\begin{enumerate}[(I)] 
	\item The manifolds \( M^7_{m,n} \) and \( M^7_{m',n'} \) are orientation preserving homotopy equivalent if and only if \( n = n' \) and  $ tm' \equiv  m $ (mod $(n,12)$), where \( t^2 \equiv 1 \) (mod $(n,12)$).
	\item Orientation reversing homotopy equivalences between any \( M^7_{m,n} \) and \( M^7_{m',n} \) can only exist when $n$ satisfies $\bigstar$; hence $3\nmid  n$ and $4\nmid n$.  Furthermore if $2\nmid n$, then  the single oriented homotopy type admits an orientation reversing self homotopy equivalence; if $2||n$, then \( M^7_{m,n} \) is orientation reversing homotopy equivalent to \( M^7_{m',n} \) if and only if \( m + m' \not\equiv 0 \) (mod $2$).
\end{enumerate}
\end{theorem}

\begin{theorem}\label{thm:S7S8 bundle}~~Let $n,n'\geq2$.
	
	\begin{enumerate}[(I)]
		\item The manifolds \( M^{15}_{m,n} \) and \( M^{15}_{m',n'} \) are orientation preserving homotopy equivalent if and only if \( n = n' \) and  
			\begin{enumerate}[(1)]
			\item \label{thm:S7S8 bundle 8 nmid n} $tm\equiv m'$ (mod $(n,120)$) with  $t^2\equiv 1$ ~(mod $(n,120)$) for $8 \nmid n$;
			\item \label{thm:S7S8 bundle 8||n}   $tm\equiv m'$ (mod $(n,120)$) with  $t^2\equiv 1 ~\text{(mod $2n$)}$ or  $tm+60\equiv m'$ (mod $(n,120)$) with  $t^2\equiv 1+n ~\text{(mod $2n$)}$  for $8|| n$;
			\item \label{thm:S7S8 bundle 16|n}  $tm\equiv m'$ (mod $(n,120)$) with  $t^2\equiv 1 ~\text{(mod $n$)}$ for $16| n$.
		\end{enumerate}

		\item Orientation reversing homotopy equivalences between any \( M^{15}_{m,n} \) and \( M^{15}_{m',n} \) can only exist when $n$ satisfies $\bigstar$; hence $3\nmid  n$ and $4\nmid n$. Furthermore if $2\nmid n$ and $5\nmid n$, then  the single oriented homotopy type admits an orientation reversing self homotopy equivalence;  if $2\nmid n$ and $5|n$,  \( M^{15}_{m,n}\) is orientation reversing homotopy equivalent to \( M^{15}_{m',n} \) if and only if   $m' \equiv \pm 2m$  (mod $5$); if $2||n$,    \( M^{15}_{m,n} \) is orientation reversing homotopy equivalent to \( M^{15}_{m',n} \) if and only if  $m' + m \not\equiv 0$~(mod $2$) and  $m' \equiv \pm 2m$  (mod $(5,n)$).
	
	\end{enumerate}
\end{theorem}

 \begin{remark}~~
 	
 		\begin{enumerate}[(I)]
 		\item If $n=0$, then from easy computation by  \cite[Theorem 1.6]{JamesI},  we get $M^{4k-1}_{m,0}\simeq M^{4k-1}_{m',0}$  if and only if  $ m' \equiv \pm m $ (mod $n_{2k-1}$), where $n_{2k-1}=12$ or $120$ according as $k=2$ or $4$.  $M^{4k-1}_{m,0}$   has an orientation reversing 
 		self-diﬀeomorphism by  \cite[Lemma  2.16]{S8bundles} for $k=4$ and the same proof of that, with the dimensions of spheres and disks changed accordingly,   also applies to the case $k=2$.

 	\item 	When $8\nmid n$, (\ref{thm:S7S8 bundle 8 nmid n}) of Theorem \ref{thm:S7S8 bundle} confirms  the Conjecture 5.0.3 of \cite{S8bundles}. However, the conjecture is not true otherwise. For example,
 	
 	when $8||n$, by Theorem \ref{thm:S7S8 bundle} (\ref{thm:S7S8 bundle 8||n}), $M^{15}_{60,120}$ and  $M^{15}_{0,120}$ are orientation preserving homotopy equivalent but  $(n,m,m')=(120,60,0)$ does not  satisfy the conditions of  the Conjecture; 
 	
    when $16|n$,  
 $M^{15}_{1,16}$ and  $M^{15}_{5,16}$ are not orientation preserving homotopy equivalent by Theorem \ref{thm:S7S8 bundle} (\ref{thm:S7S8 bundle 16|n}) but $(n,m,m')=(16,1,5)$  satisfies the condition of  the Conjecture.
 		\end{enumerate}
 \end{remark}

 Theorem \ref{thm:S3S4 bundle} and \ref{thm:S7S8 bundle} imply the following corollary. 
 \begin{corollary}\label{Cor:Gn for k=2,4}  The number of  the total spaces of $S^{2k-1}$-fibrations over $S^{2k}$ up to homotopy for $k=2,4$ are given as follows
 		\begin{itemize}
 		\item   
 		$G_2^n=\left\{
 		\begin{array}{ll}
 			1, &\! \hbox{if $(12,n)=2$ and $n$ satisfies $\bigstar$} \\
 			\frac{(r+1)(t+1)}{2}, &\! \hbox{if $(12,n)=2^rt, r=0,1,2, t=1,3$ and $n$ does not satisfies $\bigstar$.}
 		\end{array}
 		\right. $
 		\item 	$G_4^n=\left\{
 		\begin{array}{ll}
 			t, &~ \hbox{if $(240,n)=t^2+1$ and $t=1,2,3$ and $n$ satisfies $\bigstar$} \\
 			\frac{(r+1)(t_1+1)(t_2+1)}{4}, &\! \hbox{ $\begin{tabular}{c}
 				if	$(240,n)=2^rt_1t_2, r=0,1,2,4, t_1=1,3,$ \\
 					$t_2=1,5$	and $n$ does not satisfies $\bigstar$\\
 				\end{tabular}$}
 		\end{array}
 		\right. $
 	\end{itemize}
 \end{corollary}

 \begin{theorem}\label{Thm: all odd k fibr}
For $k\neq 2,4$,  the total spaces $X_i,i=1,2$ of two $S^{2k-1}$-fibrations  over $S^{2k}$ with orientations defined in (\ref{Def:Orien of X}) are not orientation reversing homotopy equivalent. 
 \end{theorem}

  For $k=3,5$ and $6$, we  classify the homotopy types of the total spaces of $S^{2k-1}$- fibrations  over $S^{2k}$ by listing the attaching map  of the top cell  when the total space is regarded as a CW complex.
 
 \begin{theorem}\label{Thm: Gkn}
 	$P^{2k}(n)\cup_{f}e^{4k-1}$ is homotopy equivalent to  the total space of any $S^{2k-1}$-fibration over $S^{2k}$ if and only if $f$ is in the following list:
 	\begin{enumerate}[$\bullet$]
 		\item for $k=3$,
 		\newline if $2|n$,
 	   then	$G_3^n=1$ and 	$f=\theta_{3}^n$;

 		\item for $k=5$	
 		\newline if $2|n$, then $G_5^n=8$ and $f=\theta_{5}^n+i_9\bar\xi$, $\bar\xi\in S_0$;
 		\item for $k=6$	 
 		\newline if  $2||n$, then $G_6^n=2(1+\rho_{3}^n+3\rho_{9}^n)(1+3\rho_{7}^n)$ and

 		\qquad $f=\theta_6^n+b_1i_{11}\zeta_{11}+ c \rho_{3}^n i_{11} \alpha_{3}^{11}+e\rho_{7}^n i_{11}\alpha_{1}^{11}$;
 		\newline if $4||n$, then $G_6^n=3(1+\rho_{3}^n+3\rho_{9}^n)(1+3\rho_{7}^n)$ and
 		
 		\qquad  $f=\theta_6^n+b_2i_{11}\zeta_{11}+ c \rho_{3}^n i_{11} \alpha_{3}^{11}+e\rho_{7}^n i_{11} \alpha_{1}^{11}$; 
 		\newline  if $8|n$, then $G_6^n=5(1+\rho_{3}^n+3\rho_{9}^n)(1+3\rho_{7}^n)$ and 
 		
 		\qquad	$f=\theta_6^n+b_4i_{11}\zeta_{11}+ c \rho_{3}^n i_{11} \alpha_{3}^{11}+e\rho_{7}^n i_{11} \alpha_{1}^{11}$.
 	\end{enumerate}
  where $b_i\in \{0,1,\cdots,i\}$,   $c\in \{0,1\}$ or $\{0,1,2,3,4\}$ according as $3||n$ or $9|n$,  $e\in\{ 0,1,2,3\}$.
 \end{theorem}

This paper is organized as follows. Section \ref{sec:compute K_k^n} computes the group $K_k^n$.   In Section \ref{sec:Number Theory}, we give some lemmas in  number theory which will be used in classifying the sphere bundles. Section \ref{sec:Action E(X)} studies the action of the group of self-homotopy equivalences of $P^{2k}(n)$ on attaching map $f$. Sections \ref{sec:S7-bundle over S8} and  \ref{sec:fibrations for k=3,5,6} give proofs of the main results about the homotopy classification.

\section{Computation of $K_k^n$}
\label{sec:compute K_k^n}

In this section,  we calculate $K^n_k=Ker(p_{2k\ast}:\pi_{4k-2}(\Pn)\to \pi_{4k-2}(S^{2k}))$. 

The following Lemma comes from \cite[Theorem 1.7 (i)]{ZhuPanfiber}
\begin{lemma}\label{lem:ord of theta}
	For a finite abelian group $A$, denote $o(A)=min\{ \text{positive integer}~a|~aA=0\}$ as the order of $A$ and $o_{k}^n=o(K_{k}^n)$.
	\begin{align*}
		\mathbf{order} (\theta_{k}^n)=o_{k}^n=\left\{
		\begin{array}{ll}
			n, & \hbox{$2\nmid n$ or $k=2, 8|n$ or $k=4, 16|n$;} \\
			4n, & \hbox{$2\nmid k$, and $2||n$;}	
			\\
			2n, & \hbox{otherwise.}
		\end{array}
		\right.
	\end{align*}
	Moreover if $2|n$, then  $K_{k}^{n}\cong \Z_{o_{k}^n}\oplus A$ with $2 o(A)|o_{k}^n$. 
\end{lemma}

For $k=2,4$,  there is  the following commutative diagram  \cite[ Proposition 5.82(ii)]{Ranicki}
\begin{equation}\label{diam: J Suspension}
	\xymatrix{
		\pi_{2k-1}(SO(2k-1))\ar[d]_{J} \ar[r]^{\mathbf{i}_\ast} &\pi_{2k-1}(SO(2k))\ar[d]_{J}\\
		\pi_{4k-2}(S^{2k-1}) \ar[r]^{\Sigma}  &\pi_{4k-1}(S^{2k}) }
\end{equation}
where $\Sigma$ is the suspension map and   $J$ is the $J$-homomorphism given in G.W.Whitehead \cite{GWWhitehead}. Since  its stablelization  $J:\mathbb{Z} \cong \pi_{2k-1}(SO) \rightarrow \pi^s_{2k-1}$ is surjective  by \cite[Theorem 1.5,1.6 ]{Adams},  the left  $J$-homomorphism  in (\ref{diam: J Suspension}) is also surjective. Thus we get 
\begin{align}
	& \pi_{4k-2}(S^{2k-1})=\Z_{n_{2k-1}}\{\xi_{2k-1}\} ~\text{with}~\xi_{2k-1}=J(\bar \rho_{2k-1}), \label{equ： xi3,xi7}
\end{align}
where $n_{2k-1}=12$ or $120$ according as $k=2$ or $4$. 

 From \cite[Lemma 2.2]{ZhuPanfiber}, 
\begin{align}
	&j_{\ast}(K^n_k)=\Z_{n_2}\{[X_{2k}, \iota_{2k-1}]_r\},  ~\text{where }~ n_2\!=\!\left\{\!\!
	\begin{array}{ll}
		n, &\! \hbox{$k=2,4$;} \\
		2n, &\! \hbox{$k\neq 2,4$ and $2|n$.}
	\end{array}
	\right. \label{equ:i(K)}
\end{align}
Thus,  sequence (\ref{exact: K}) becomes the following two short exact sequences 
\begin{align*}
&0 \to  \Z_{(n_{2k-1},n)}\{i_{2k-1}\xi_{2k-1}\} {\to}  K^n_k \overset{j_\ast}{\to} \Z_{n}\{[X_{2k},\iota_{2k-1}]_r\} \to 0, ~~k=2,4
\end{align*}
where  $\Z_{(a,b)}=\Z_1=0$ for $(a,b)=1$.

 The following results of  $K^n_k$ for $k=2,4$ are obtained by Lemma (4) and Theorem (iii) of \cite{SASAO2} 
\begin{lemma}\label{Lem:htpygp of K2,4}
	\begin{align*}
		&K^n_2=\left\{
		\begin{array}{ll}
	\Z_{(12,n)}\{i_{3}\xi_{3}\}	\oplus 	  \Z_{n}\{\theta_{2}^{n}\}, & \hbox{$2\nmid n$ or $ 8|n$;} \\
		\Z_{\frac{(n,12)}{2}}\{\frac{2n}{(n,12)}\theta_{2}^{n}+i_{3}\xi_{3}\}\oplus \Z_{2n}\{\theta_{2}^{n}\}, & \hbox{$2||n ~\text{and}~8\nmid n$.}
		\end{array}
		\right.\\
		&K^n_4=\left\{
		\begin{array}{ll}
				\Z_{(120,n)}\{i_{7} \xi_7\}\oplus 	\Z_{n}\{\theta_{4}^{n}\} , & \hbox{$2\nmid n$ or $ 16|n$;}\\
		\Z_{\frac{(n,120)}{2}}\{\frac{2n}{(n,120)}\theta_{4}^{n}+i_{7}\xi_{7}\}\oplus \Z_{2n}\{\theta_{4}^{n}\},& \hbox{$2|n$ and $16\nmid n$;} 
		\end{array}
		\right.\\
	 &	n\theta_{k}^{n}=\frac{n(n-1)}{2}i_{2k-1}\xi_{2k-1}, k=2,4. 
	\end{align*}
\end{lemma}

For $ k\neq 2,4$ and $2|n$, by (\ref{equ:i(K)}),  the exact sequence (\ref{exact: K}) becomes 
\begin{align}
	0 \to  \pi_{4k-2}( S^{2k-1})/n\pi_{4k-2}( S^{2k-1}) \overset{i_{2k-1\ast}}{\to}  K^n_k \overset{j_\ast}{\to} j_{\ast}(K^n_k) \to 0   \label{Exact: K}
\end{align}

From (\ref{equ:i(K)})  and  Lemma \ref{lem:ord of theta},  we get 
\begin{proposition}\label{Prop:htpygp of K neq2,4}
	Let $k\neq 2,4$ and $2|n$. 
	\begin{enumerate}[(i)]
		\item \label{o(theat) odd k 2||n} for odd $k$ and  $2||n$, 	$\mathbf{order} (\theta_{k}^n)=4n$ and $2n\theta_{k}^n=i_{2k-1}\xi$ for some $\xi\in \pi_{4k-2}(S^{2k-1})$;
		\item \label{o(theat) otherwise} otherwise,  the exact sequence (\ref{Exact: K}) is split and 
		\begin{align*}
			K^n_k=i_{2k-1\ast}\pi_{4k-2}( S^{2k-1})/n i_{2k-1\ast}\pi_{4k-2}( S^{2k-1})\oplus \Z_{2n}\{\theta_{k}^{n}\}. 
		\end{align*}
	\end{enumerate}
\end{proposition}

\begin{lemma}\label{Lem:K3,5}
	\begin{align*}
&K^n_3=\left\{
		\begin{array}{ll}
			\Z_{4n}\{\theta_{3}^{n}\}~\text{with}~i_5\nu_5\eta_{8}^2=2n\theta_3^n, & \hbox{$2||n$;} \\
		\Z_2\{i_{5}\nu_5\eta_{8}^2\}\oplus 	\Z_{2n}\{\theta_{3}^{n}\}, & \hbox{$ 4|n$.}
		\end{array}
		\right.\\
&K^n_5=\left\{
\begin{array}{ll}
	 \Z_2\{i_9 \nu_9^3\}\oplus \Z_2\{i_9 \mu_9\}\oplus \Z_2\{i_9 \eta_9\varepsilon_{10}\}\oplus \Z_{4n}\{\theta_{5}^{n}\}, & \hbox{$2||n$;} \\
\Z_2\{i_9 \nu_9^3\}\oplus \Z_2\{i_9 \mu_9\}\oplus \Z_2\{i_9 \eta_9\varepsilon_{10}\}\oplus \Z_2\{i_9 \sigma_9\eta_{16}^2\}\oplus \Z_{2n}\{\theta_{5}^{n}\}, & \hbox{$ 4|n$.}
\end{array}
\right.\\
&	~~~~~~~~~~~\text{with}~i_9\nu_9^3+i_9\eta_9\varepsilon_{10}+i_9\sigma_9\eta_{16}^2=2n\theta_{5}^n~~\text{for}~2||n.\\
&K^n_6=\Z_{(8,n)}\{i_{11} \zeta_{11}\}\!\oplus \!\Z_{(9,n)}\{i_{11} \bar\alpha^{11}_{3}\}\!\oplus \! \Z_{(7,n)}\{i_{11} \alpha^{11}_{1}\}\!\oplus \!	\Z_{2n}\{\theta_{6}^{n}\},  2|n.
	\end{align*}
\end{lemma}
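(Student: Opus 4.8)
\textbf{Proof plan for Lemma \ref{Lem:K3,5}.}

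The plan is to treat the four cases $k=3,5,6$ uniformly by combining the exact sequence (\ref{Exact: K}) (or (\ref{exact: K}) when $k=6$), Proposition \ref{Prop:htpygp of K neq2,4}, the prime decomposition of Lemma \ref{lem: Kn=Kn1+Kn2}, and Toda's computations of $\pi_{4k-2}(S^{2k-1})$ recalled in the introduction. First, for $2\nmid n$: by Lemma \ref{lem: Kn=Kn1+Kn2} it suffices to work one odd prime power $p^e$ at a time, and the relevant $p$-primary part of $\pi_{4k-2}(S^{2k-1})$ is $\Z_3\{\alpha_1^3\}$ (hence $0$ for $p\neq 3$) when $k=3$; trivial for $k=5$; and $\Z_9\{\bar\alpha_3^{11}\}\oplus\Z_7\{\alpha_1^{11}\}$ for $k=6$. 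Since $i_{2k-1\ast}$ is injective on $\pi_{4k-2}(S^{2k-1})/n\,\pi_{4k-2}(S^{2k-1})$ by (\ref{Exact: K}) and the quotient $i_\ast(K^n_k)$ is $\Z_n\{2[X_{2k},\iota_{2k-1}]_r\}\cong\Z_n$ with lift $\theta_k^n$, Proposition \ref{Prop:htpygp of K neq2,4} directly gives the odd-$n$ splitting; one then only has to observe that $i_{2k-1\ast}\pi_{4k-2}(S^{2k-1})/n(\cdots)$ reduces to the stated $\Z_{(9,n)}\oplus\Z_{(7,n)}$ (for $k=6$), to $0$ (for $k=5$), and to a summand that is absorbed into $\Z_n\{\theta_3^n\}$ after noting $3|n$ forces the $\alpha_1^3$-contribution to already sit inside the cyclic group generated by $\theta_3^n$ — this last point uses that for odd $n$ the order of $K_3^n$ at $3$ is exactly $(3,n)\cdot(3,n)$-bounded, so the extension is the trivial one of cyclic groups of coprime-to-other-primes order.

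For the cases $2\mid n$, I would again split off the odd part via Lemma \ref{lem: Kn=Kn1+Kn2} (handled exactly as above) and concentrate on $n=2^r$. Here the sequence (\ref{Exact: K}) has quotient $i_\ast(K^n_k)=\Z_{2n}\{[X_{2k},\iota_{2k-1}]_r\}$ with a lift $\theta_k^n$ of order dividing $2n\cdot(\text{something})$, and kernel $\pi_{4k-2}(S^{2k-1})_{(2)}/n$. The key structural input is the relation identifying which generator of the $2$-primary stable (or metastable) group $\pi_{4k-2}(S^{2k-1})_{(2)}$ becomes $n\theta_k^n$ (for $2\|n$) or survives as a genuine $\Z_2$ summand (for $4\mid n$). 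When $4\mid n$, Proposition \ref{Prop:htpygp of K neq2,4} already gives the splitting $\pi_{4k-2}(S^{2k-1})_{(2)}/n \oplus \Z_{2n}\{\theta_k^n\}$; so the content is purely that $\pi_{4k-2}(S^{2k-1})_{(2)}/n$ equals the listed direct sum of $\Z_2$'s — immediate from Toda's list since every $2$-primary generator in $\pi_6(S^3),\pi_{10}(S^5),\pi_{18}(S^9),\pi_{22}(S^{11})$ has order dividing the relevant power and $4\mid n$ kills none of the $\Z_2$'s but the $\Z_4\{\nu'\}$ in $k=3$ and $\Z_8\{\zeta_{11}\}$ in $k=6$ contribute a full $\Z_2$ or $\Z_4$ — wait, here one must be careful: for $k=3$, $\pi_6(S^3)_{(2)}=\Z_4\{\nu'\}$, so $\pi_6(S^3)_{(2)}/n$ is $\Z_2\{2\nu'\}$ when $2\|n$ and $\Z_4\{\nu'\}$ when $4\mid n$; but $\nu'$ is \emph{unstable}, and the stabilization $E\nu'=2\nu_5$ has order $2$, so the Whitehead-product correction terms in (\ref{exact: K}) collapse $i_3(2\nu')$ — this subtle interplay between the unstable group, the stable EHP behaviour, and the relative Whitehead product $[X_{2k},\iota_{2k-1}]_r$ is exactly the place where I expect the argument to get delicate, and it is governed by Theorem (iii) of \cite{SASAO2} together with the explicit identity $i_5\nu_5\eta_8^2=2n\theta_3^n$ that the lemma asserts. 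So for $k=3$ the real work for $4\mid n$ is to show the $\Z_4$-ambiguity degenerates to $\Z_2\{i_5\nu_5\eta_8^2\}\oplus\Z_{2n}\{\theta_3^n\}$, i.e.\ that $i_3\nu'$ is a multiple of $\theta_3^n$ modulo the $\eta$-term; I would extract this from (\ref{exact: K}) applied in degree $4k-2=6$, where $\pi_6(P^6(n),S^5)$ is computed via Blakers--Massey and the boundary $\partial$ hits $\nu_5$.

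For $2\|n$ (the non-split case), the plan is to produce the cyclic generator directly. One knows from (\ref{exact: K}) that $K_k^n$ is an extension of $\Z_{2n}$ by $\pi_{4k-2}(S^{2k-1})_{(2)}/n\cong$ a sum of $\Z_2$'s; I would show the extension is non-trivial on exactly one $\Z_2$ factor and trivial on the rest, giving $\Z_{4n}$ (for $k=3$, where the single surviving $\Z_2$ from $\nu'$ merges) plus the remaining $\Z_2$'s as a direct summand (for $k=5,6$, where more $2$-torsion survives). The non-trivial extension class is detected by the relation $i_{2k-1}(\text{gen}) = 2n\,\theta_k^n$: I would verify, using Proposition \ref{Pro: tid alpha} with $t=n$ and $[\iota_P,\iota_P]H_2(\theta_k^n)$, that $n\theta_k^n$ is \emph{not} in the image of $i_{2k-1\ast}$ from a torsion-free direct complement, forcing $2n\theta_k^n = i_{2k-1\ast}(\xi)$ for the stated $\xi$ ($i_5\nu_5\eta_8^2$, resp.\ $i_9\nu_9^3+i_9\eta_9\varepsilon_{10}+i_9\sigma_9\eta_{16}^2$, resp.\ $i_{11}\zeta_{11}$-multiple); concretely this pins down $\theta_k^n$ to have order $4n$ in the $k=3$ case. \textbf{The main obstacle} is precisely this $2\|n$ analysis: identifying the exact generator of $\pi_{4k-2}(S^{2k-1})_{(2)}$ that equals $2n\theta_k^n$, and proving the complementary $\Z_2$'s split off, which for $k=5$ requires knowing the four $\Z_2$ summands of $\pi_{18}(S^9)$ individually and which three of them ($S_0$) form the split part while the fourth, $\sigma_9\eta_{16}^2$, combines — this is where I would lean hardest on Toda's bracket relations and on Theorem (iii) of \cite{SASAO2}, checking the EHP-sequence input that controls $\partial:\pi_{4k-2}(P^{2k}(n),S^{2k-1})\to\pi_{4k-3}(S^{2k-1})$ in each of the three remaining dimensions.
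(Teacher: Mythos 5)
There is a genuine gap, and also a concrete error. The error first: throughout your discussion of $k=3$ you work with $\pi_6(S^3)=\Z_4\{\nu'\}\oplus\Z_3\{\alpha_1^3\}$, but for $k=3$ the relevant group in (\ref{Exact: K}) is $\pi_{4k-2}(S^{2k-1})=\pi_{10}(S^5)=\Z_2\{\nu_5\fhe\eta_8^2\}$. Your claims that the odd part of $K_3^n$ receives an $\alpha_1^3$-contribution ``absorbed into $\Z_n\{\theta_3^n\}$'', and that for $4\mid n$ one must collapse a $\Z_4\{\nu'\}$-ambiguity using $E\nu'=2\nu_5$, are therefore based on the wrong homotopy group ($\nu'$ and $\alpha_1^3$ belong to the $k=2$ case, which is Lemma \ref{Lem:htpygp of K2,4}, not this lemma). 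For $k=3$ and $4\mid n$ the answer $\Z_2\{i_5\nu_5\eta_8^2\}\oplus\Z_{2n}\{\theta_3^n\}$ falls out of Proposition \ref{Prop:htpygp of K neq2,4} with no such subtlety; the same proposition disposes of all of $2\nmid n$ and $4\mid n$ for $k=3,5,6$, exactly as in the first line of the paper's proof.

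The more serious gap is in the $2\Vert n$ case, which you correctly identify as the crux but do not actually resolve. After splitting $K_k^n\cong K_k^2\oplus K_k^{n_0}$ via Lemma \ref{lem: Kn=Kn1+Kn2} (with $n_0$ odd), the extension $0\to\pi_{4k-2}(S^{2k-1})/2\to K_k^2\to\Z_4\to 0$ is not determined by the EHP sequence, Toda brackets, or Proposition \ref{Pro: tid alpha} with $t=n$ (that proposition only bounds exponents; it cannot distinguish $\Z_8$ from $\Z_4\oplus\Z_2$). The paper settles it by importing explicit computations of $\pi_{4k-2}(P^{2k}(2))$: Theorem 5.11 of \cite{WJProjplane} gives $\pi_{10}(P^6(2))=\Z_8\{\theta_3^2\}$ with $i_5\nu_5\eta_8^2=4\theta_3^2$; Theorem 5.18(5) of \cite{WJProjplane} gives $K_5^2\cong\Z_2^3\oplus\Z_8$; and \cite[Theorem 2.2]{CohenWu95} supplies an order-$4$ element of $K_6^2$ not divisible by $2$. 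Your plan cites none of these, so the extension class is left undetermined. Likewise, for $k=5$ you note that three of the four $\Z_2$'s must split off but give no mechanism; the paper's argument is a stabilization/retraction: the composite $S_0\hookrightarrow\pi_{18}(S^9)\to K_5^n\to K_5^{n,s}$ is an isomorphism onto $\pi_{18}^s(S^9)$, furnishing a left inverse, and the identity $2n\theta_5^n=i_9(\nu_9^3+\eta_9\varepsilon_{10}+\sigma_9\eta_{16}^2)$ is then forced by $\Sigma^\infty(2n\theta_5^n)=0$ together with $\Sigma^\infty(\sigma_9\eta_{16}^2)=\nu^3+\eta\varepsilon$. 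Without these two ingredients (the mod-$2$ Moore space computations and the stable retraction) the proof does not close.
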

	\begin{proof}
For	$K^n_3$,  $K^n_5$ with $4|n$ and $K^n_6$  with $2|n$ are obtained  from  Proposition  \ref{Prop:htpygp of K neq2,4} (\ref{o(theat) otherwise}).
		
For $2||n$ and  $k=3$,  from (\ref{Exact: K}) and Proposition \ref {Prop:htpygp of K neq2,4} (\ref{o(theat) odd k 2||n}), we get     $K_3^n=\Z_{4n}\{\theta_{3}^{n}\}$ with $i_5\nu_5\eta_{8}^2=2n\theta_3^n$. 

Next, we determine the $K^n_5$ for $2||n$. Firstly, for $P^{10}(n)$ with $2|n$, 
there is the following commutative diagram, where $K_k^{n,s}:=Ker(\pi_{4k-2}^s(P^{2k}(n))\xrightarrow{p_\ast}\pi_{4k-2}^s(S^{2k}))$
\begin{align}
	\small{\xymatrix{
\Z_{2}\{\nu_9^3\} \!\oplus \!\Z_{2}\{\mu_9\}\!\oplus \! \Z_{2}\{\eta_9\varepsilon_{10}\}\ar[rd]^{\Sigma^{\infty}\cong}\ar@{^{(}->}[r] & \pi_{18}(S^9) \ar[d]^{\Sigma^{\infty}}\ar[r]^{i_{9\ast}} & K^n_5\ar[d]^{\Sigma^{\infty}} \\
\!\!\!\!\Z_{2}\{\nu^3\}\!\oplus \! \Z_{2}\{\mu\}\!\oplus \! \Z_{2}\{\eta\varepsilon\}\ar@{=}[r] 	& \pi_{18}^s(S^9) \ar@{^{(}->}[r]^-{i^s_{9\ast}}_-{\cong}& K_5^{n,s}
} }	\label{Diam: K,Ks}
\end{align}
where $\Sigma^{\infty}$ denotes stabilization (from a homotopy group, or subgroup, to its stable counterpart), and $f^s=\Sigma^{\infty}f$ for a map $f$. Since the left map $\Sigma^{\infty}$ and $i^s_{9\ast}$ in the above  diagram are isomorphisms,  the inclusion $\Z_{2}\{i_9\nu_9^3\} \oplus \Z_{2}\{i_9\mu_9\}\oplus \Z_{2}\{i_9\eta_9\varepsilon_{10}\}\hookrightarrow K^n_5$ has a left inverse. This implies it is a direct summand of $K^n_5$. 

From  exact sequence (\ref{Exact: K}) and  Proposition \ref {Prop:htpygp of K neq2,4} (\ref{o(theat) odd k 2||n}), 
\begin{align*}
	K_5^n=	 \Z_2\{i_9 \nu_9^3\}\oplus \Z_2\{i_9 \mu_9\}\oplus \Z_2\{i_9 \eta_9\varepsilon_{10}\}\oplus\Z_{4n}\{\theta_{5}^{n}\}~\text{for}~2||n, 
\end{align*}
 with $i_{9\ast}(\xi_0)=2n\theta_{5}^{n}$ for some $\xi_0\in \pi_{18}(S^9)$.   By the commutative square in diagram (\ref{Diam: K,Ks}), $i_{9\ast}^s(\Sigma ^{\infty}(\xi_0))=\Sigma^{\infty}(i_{9\ast}(\xi_0))=2n\Sigma^{\infty}(\theta_{5}^{n})=0$, implying 
\begin{align}
	\Sigma^{\infty}(\xi_0)=0. \label{equ:stable xi 0}
\end{align}
By $\Sigma^{\infty}(\sigma_9\eta_{16}^2)=\nu^{3}+\eta\varepsilon$ (\cite[Lemma 6.3, 6.4, Equation (7.5), Theorem 7.2]{Toda}), $\xi_0=\nu_9^3+\eta_9\varepsilon_{10}+\sigma_9\eta_{16}^2$.  Hence 
$i_9\nu_9^3+i_9\eta_9\varepsilon_{10}+i_9\sigma_9\eta_{16}^2=2n\theta_{5}^{n}$.
	
	\end{proof}

\section{Some results in Number Theory}
\label{sec:Number Theory}
The following  lemmas and propositions in Number Theory  are useful in the proof of  Theorem \ref{Thm: Gkn}

 \begin{lemma}\label{Lem: Number theory 1}
	For $r\geq 3$, $x^2\equiv 1$ (mod $2^r$) if and only if $x\equiv \pm 1, \pm 5^{2^{r-3}}$ (mod $2^r$). 
\end{lemma}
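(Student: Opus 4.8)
The plan is to reduce the congruence to the well-known structure of the unit group $(\Z/2^r\Z)^{\times}$ and then to identify its $2$-torsion explicitly.

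First I would record the auxiliary congruence
\begin{align*}
5^{2^{j}}\equiv 1+2^{j+2}\pmod{2^{j+3}},\qquad j\geq 0,
\end{align*}
proved by induction on $j$: it holds for $j=0$ since $5=1+2^{2}$, and if $5^{2^{j}}=1+2^{j+2}(1+2t)$ for some integer $t$, then squaring gives $5^{2^{j+1}}=1+2^{j+3}(1+2t)+2^{2j+4}(1+2t)^{2}\equiv 1+2^{j+3}\pmod{2^{j+4}}$, since $2j+4\geq j+4$. Putting $j=r-3$ yields $5^{2^{r-3}}\equiv 1+2^{r-1}\pmod{2^{r}}$; squaring once more shows $5^{2^{r-2}}\equiv 1\pmod{2^{r}}$ (using $2r-2\geq r$), so $5$ has multiplicative order exactly $2^{r-2}$ modulo $2^{r}$. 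Since every power of $5$ is $\equiv 1\pmod 4$ while $-1\equiv 3\pmod 4$ (as $r\geq 2$), the subgroup $\lr{-1}$ meets $\lr{5}$ trivially, so $(\Z/2^{r}\Z)^{\times}=\lr{-1}\times\lr{5}\cong \Z/2\times\Z/2^{r-2}$ by comparing orders.

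Next, the solutions of $x^{2}\equiv 1\pmod{2^{r}}$ are precisely the elements of order dividing $2$ in $(\Z/2^{r}\Z)^{\times}$. Writing a unit as $(-1)^{a}5^{b}$ with $0\leq b<2^{r-2}$, its square is $5^{2b}$, which equals $1$ iff $2^{r-2}\mid 2b$, i.e.\ iff $b\in\{0,\,2^{r-3}\}$ (here $r\geq 3$ is used). Hence the $2$-torsion consists of exactly the four residues $1,\ -1,\ 5^{2^{r-3}},\ -5^{2^{r-3}}$, which is the asserted equivalence $x\equiv\pm1,\ \pm5^{2^{r-3}}\pmod{2^{r}}$. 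That these four residues are genuinely distinct mod $2^{r}$ follows from $5^{2^{r-3}}\equiv 1+2^{r-1}\pmod{2^{r}}$, which is $\not\equiv\pm1$ for $r\geq 3$, together with $1\not\equiv-1$ and the oddness of $5^{2^{r-3}}$. The only substantive point is the order computation for $5$ modulo $2^{r}$ (equivalently the auxiliary $2$-adic congruence above), and this is the step I would expect a referee to want spelled out; everything else is bookkeeping inside a group isomorphic to $\Z/2\times\Z/2^{r-2}$. An alternative, entirely self-contained route avoids the group language altogether: induct on $r$ with base case $r=3$ (where the four residues are just the odd classes mod $8$), and in the inductive step observe that each of the four residues mod $2^{r}$ lifts to two residues mod $2^{r+1}$ whose squares agree mod $2^{r+1}$, so one only has to check which lifts of $\pm1$ and $\pm5^{2^{r-3}}$ square to $1$ mod $2^{r+1}$ — a short computation using $5^{2^{r-3}}\equiv 1+2^{r-1}\pmod{2^{r}}$ once more.
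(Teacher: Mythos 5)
Your proof is correct and takes essentially the same route as the paper's: both arguments reduce the congruence to finding the $2$-torsion of $(\Z/2^r\Z)^{\times}=\lr{-1}\times\lr{5}$ and conclude that the exponent of $5$ must be $0$ or $2^{r-3}$. The only difference is that you prove the structure of the unit group yourself (via the order computation $5^{2^{r-3}}\equiv 1+2^{r-1}\pmod{2^r}$), whereas the paper simply cites Theorem 4.6 of LeVeque for $\Z_{2^r}^{\ast}=\{\pm 5^{w}\mid 0\leq w<2^{r-2}\}$.
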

\begin{proof}
	By Theorem 4.6 of \cite{Numbertheory}, it is well known that
	\begin{align*}
		\Z_{2^r}^{\ast}=\{\pm 5^{w}~|0\leq w<2^{r-2}\}. 	
	\end{align*} 
	So $x=\pm 5^w$ for some $0\leq w<2^{r-2}$.
	$x^2\equiv 1$ (mod $2^r$) if and only if $5^{2w}\equiv 1$ (mod $2^r$). That is 
	$2w=2^{r-2}u, u\geq 0$.  Hence  $u=0,1$, i.e., $w=0$ or $2^{r-3}$. 
\end{proof}

\begin{lemma}\label{Lem: Number theory x2=pm 1} 
Let $n=2^rp_1^{e_1}p_2^{e_2}\cdots p_s^{e_s}$ be the prime factorization of $n$.
	\begin{enumerate}[(i)]
		\item \label{NumberTheo x^2=1}~$x^2\equiv 1$ (mod $p^{e}$) for odd prime $p$ has two solutions  ~$x\equiv \pm 1$ (mod $p^{e}$).
		\item \label{NumberTheo x^2=1 solution}~Let $r'\leq min\{3,r-1\}$, $e'_{i}\leq e_i$ ($i=1,2,\cdots,s$). 
		
		If $t^2\equiv 1$ (mod $n$), then $t\equiv \pm 1$ (mod $2^{r'}$), $t\equiv \pm 1$ (mod $p_i^{e'}$), $i=1,2,\cdots,s$;
		
		Conversely, for any set of integers $\epsilon_i\in \{1,-1\}$,  $i=0,1,\cdots, s$, there exists $t$ with $t^2\equiv 1$ (mod $n$) such that  $t\equiv \epsilon_0$ (mod $2^{r'}$), $t\equiv \epsilon_i$ (mod $p_i^{e'}$), $i=1,2,\cdots,s$. 
		
		\item \label{NumberTheo x^2=-1}~ The congruence $x^2\equiv -1$ (mod $n$) is solvable if and only if  $r=0,1$ and $p_i\equiv 1$ (mod $4$), $i=1,\cdots, s$.
		\item \label{NumberTheo x^2=-1 5|n}~Let $5|n$ and  $x^2\equiv -1$ (mod $n$) be solvable.  Then any  solution $\alpha$ satisfies  one of $\alpha\equiv 2$  (mod $5$) or  $\alpha\equiv -2$  (mod $5$); Conversely, for any $\epsilon\in \{2,-2\}$,  there is solution $\alpha$ of  $x^2\equiv -1$ (mod $n$) such that $\alpha\equiv \epsilon$ (mod $5$). 

        	\end{enumerate}
\end{lemma}
\begin{proof}
	(\ref{NumberTheo x^2=1}) comes from the Theorem 5.2 of \cite{Numbertheory};
	
    For (\ref{NumberTheo x^2=1 solution}), firstly 
	by Theorem 3.21 of  \cite{Numbertheory},  $x^2\equiv 1$ (mod $n$) is equivalent to the simultaneous system  $x^2\equiv 1$  (mod $2^r$),  $x^2\equiv 1$  (mod $p_1^{e_1}$), $\cdots$,  $x^2\equiv 1$  (mod $p_s^{e_s}$).  Thus $t\equiv \pm 1$ (mod $2^{r'}$), $t\equiv \pm 1$ (mod $p_i^{e'}$), $i=1,2,\cdots,s$ are obtained  by Lemma \ref{Lem: Number theory 1} and  Lemma \ref{Lem: Number theory x2=pm 1} (\ref{NumberTheo x^2=1}).
    For the ``Conversely" part,  consider the system 	$x\equiv \epsilon_0$ (mod $2^r$), $x\equiv  \epsilon_i$ (mod $p_i^{e_i}$), $i=1,\cdots, s$. By the Chinese Remainder Theorem, this system  has unique solution $t\in \Z_n$ such that   $t^2\equiv 1$  (mod $n$). Clearly, $t$ satisfies the congruence equations.

	(\ref{NumberTheo x^2=-1}) is easily obtained by Theorem 3.21 and 3.22 of \cite{Numbertheory}. 
	
 For (\ref{NumberTheo x^2=-1 5|n}),  the first part holds since  $x^2\equiv -1$ (mod $n$) $\Rightarrow$  $x^2\equiv -1$ (mod $5$)  $\Rightarrow$  $x\equiv \pm 2$ (mod $5$).  For the ``Conversely" part,  Assume that  $5^e ||n$. Since 	$\epsilon^2 \equiv -1$ (mod $5$), by Hensel's Lemma \cite[Theorem 3.19]{Nathanson Number Theory},  there is an integer $\alpha_5$ such that 	$\alpha_5^2 \equiv -1$ (mod $5^e$) and $\alpha_5\equiv \epsilon$ (mod $5$). Then the existence of $\alpha$ is obtained by the Chinese Remainder Theorem. 
\end{proof}

\begin{lemma}\label{Lem: Number theory x^2=1+n 8|n} Let $2|n$. Then
	 
		\begin{enumerate}[(i)]
		\item \label{x^2=1+n 8 nmid n}
	The congruence $x^2\equiv 1+n$ (mod $2n$) has no solution for $8\nmid n$.
		\item \label{x^2=1+n 8 ||n}
		  $x^2\equiv 1+n$ (mod $2n$) is solvable for  $8||n$. Moreover, for $n=8p_1^{e_1}p_2^{e_2}\cdots p_s^{e_s}$ and any solution $\alpha$ of $x^2\equiv 1+n$ (mod $2n$), we have $\alpha\equiv \pm 5$ (mod $8$) and $\alpha\equiv \pm 1$ (mod $p_i$) for $i=1,\cdots, s$. 
		\end{enumerate}
\end{lemma}
\begin{proof}
	 (\ref{x^2=1+n 8 nmid n}) is easy to be obtained by the fact that $8|a^2-1$ for odd integer $a$. 
	
	For (\ref{x^2=1+n 8 ||n}), let $n=8p_1^{e_1}p_2^{e_2}\cdots p_s^{e_s}$. Consider the system $x\equiv 5$ (or $-5$) (mod $8$); $x\equiv 1$ (or $-1$)(mod $p_i^{e_i}$), $i=1,\cdots,s$.  By the Chinese Remainder Theorem, this system  has a unique solution $\alpha\in \Z_n$, satisfying    $\alpha^2\equiv 1$  (mod $n$).  Since   $\alpha=5+8l$ or $-5+8l$ for some integer $l$, $\alpha^2-1=8l_0$ for some odd integer $l_0$. This implies that  $\alpha^2-1=u_0n$ for some odd  integer $u_0$. Thus $\alpha^2\equiv 1+u_0n\equiv 1+n$ (mod $2n$). The proof of ``Moreover" part is just the same as that of Lemma \ref{Lem: Number theory x2=pm 1}(\ref{NumberTheo x^2=1 solution}).
\end{proof}

\begin{lemma}\label{Lem: same equ for m m'} 
If $8\nmid d$, $d |n$ and $u\in \Z$  such that $u^2 \equiv 1 ~(\text{mod}~ d)$, then there is a  $t\in \Z$ such that $t^2 \equiv 1 ~(\text{mod}~ n)$ and $t \equiv u ~(\text{mod}~ d)$. 
\end{lemma}
\begin{proof}
	Let $d=2^{r}p_1^{e_1}\cdots p_s^{e_s}$  denote the prime factorization of $d$.  By $d|n$, we get $n=2^{r'}p_1^{e'_1}p_2^{e'_2}\cdots p_s^{e'_s}n_0$ where $(n_0, 2p_1p_2\cdots p_s)=1$ and 
	$r\leq \min\{2, r'\}$,  $e_i\leq e_i'$, $i=1,2,\cdots,s$. 
	
	Without loss of generality, we assume $r\geq 1$; the proof for $r=0$ is similar and easier.  The congruence equation  $x^2 \equiv 1 ~(\text{mod}~ d)$ is equivalent to the following system of congruences
	\begin{align}
		\left\{
		\begin{array}{ll}
			x^2 \equiv 1 ~(\text{mod}~ 2^r),~\\
			x^2 \equiv 1 ~(\text{mod}~ p_{i}^{e_{i}}), i=1,\cdots,s.~
		\end{array}
		\right.  ~\Leftrightarrow  \left\{
		\begin{array}{ll}
			x \equiv \pm 1 ~(\text{mod}~ 2^r),~\\
			x\equiv \pm 1 ~(\text{mod}~ p_{i}^{e_{i}}), i=1,\cdots,s.~
		\end{array}
		\right.    \label{System for u}
	\end{align}
	where the equivalence ``$\Leftrightarrow$"  above is obtained by Theorem 5.2 of \cite{Numbertheory} for $r\leq  2$. 
	
	We  assume that  
		\begin{align*}
		u \equiv (-1)^{u_0} ~(\text{mod}~ 2^{r}), ~~	u \equiv (-1)^{u_i}~(\text{mod}~ p_{i}^{e_{i}}), i=1,\cdots,s. 
	\end{align*}
	Consider the following  system of congruences
	\begin{align*}
		t \equiv (-1)^{u_0} ~(\text{mod}~ 2^{r'}), 	t \equiv (-1)^{u_i}~(\text{mod}~ p_{i}^{e'_{i}}), i=1,\cdots,s, ~\text{and}~ t \equiv 1~(\text{mod}~ n_0). 
	\end{align*}
Above system has a unique solution $t\in \Z_n$.  Obviously, it satisfies  $t^2 \equiv 1 ~(\text{mod}~ n)$ and  $t \equiv u ~(\text{mod}~ d)$. 
\end{proof}
\begin{remark}
	For $8|d$, Lemma \ref{Lem: same equ for m m'} dose not hold  since  in (\ref{System for u}), it is known from Lemma \ref{Lem: Number theory 1} that we can not get $x \equiv \pm 1 ~(\text{mod}~ 2^r)$ by $x^2 \equiv  1 ~(\text{mod}~ 2^r)$  for $r\geq 3$.    
\end{remark}

\section{The action of the homotopy equivalences} 
\label{sec:Action E(X)}

Let $X=\Pn\cup_{f}e^{4k-1}$  be the total space of an $S^{2k-1}$-sphere fibration over $S^{2k}$ and the attaching map $f$ come from the following set 
\begin{align*}
	I^n_{k}:=\{f=\theta_{k}^{n}+i_{2k-1}\xi~|~\xi\in\pi_{4k-2}(S^{2k-1})\}.
\end{align*}
 Let $Aut(X)$ be the group of self-homotopy equivalences of $X$.

\begin{proposition}\label{Prop: orient hty equi of X}
Let $X_{t}=P^{2k}(n)\cup_{f_t} e^{4k-1},{f_t}=\theta^n_k +i_{2k-1}\xi_t$ be CW complexes with $\iota$-compatible orientation classes $\sigma_{f_t},t=1,2$ where $\iota$ is a generator of $H_{4k-1}(S^{4k-1})$. Then $X_{1}$ and $X_{2}$ are orientation preserving homotopy equivalent iff
there exists a  $g\in Aut(P^{2k}(n))$ such that $g f_1 = f_2$.
\end{proposition}
\begin{proof}
	If $h:X_{1} \to X_{2}$ is an orientation preserving homotopy equivalence, then $h$ can be assumed to be a cellular map and $h$ restricts to a homotopy equivalence $g:P^{2k}(n)\to P^{2k}(n)$.
	 There is a map $\bar{g}:S^{4k-2}\to S^{4k-2}$ such that the following diagrams are homotopy commutative

	$$	\small{\xymatrix{
			S^{4k-2}\ar[r]^{f_1}\ar[d]^{ \bar{g}} 	&P^{2k}(n)\ar[d]^-{g} \ar[r]^-{i_1}&X_1\ar[r]^-{q_1}\ar[d]^{h}&S^{4k-1}\ar[d]^{\Sigma \bar{g}} \\
			S^{4k-2}\ar[r]^{f_2}&	P^{2k}(n) \ar[r]^-{i_2}& X_2 \ar[r]^{q_2} & S^{4k-1}
	} }. $$
	By definiton, $ (q_t)_\ast (\sigma_{f_t})=\iota$, $t=1,2$, where $\iota$ is a fixed generator of $H_{4k-1}(S_{4k-1})$. 
	
		It follows that $(\Sigma \bar{g})_\ast (q_1)_\ast (\sigma_{f_1})=(q_2)_\ast h_\ast (\sigma_{f_1})=(q_2)_\ast (\sigma_{f_2})=\iota$. 
Hence, $(\Sigma \bar{g})_\ast (\iota)=\iota$ and  $\Sigma \bar{g} = \iota_{4k-1}$.  These imply  $\bar{g} =\iota_{4k-2}$ and 	$gf_1 = f_2$.

	On the other hand if there exists a homotopy equivalence $g:P^{2k}(n)\to P^{2k}(n)$ such that $g f_1 = f_2$,
	then $g$ induces a homotopy equivalence $h:X_{1} \to X_{2}$
	such that the following diagram is homotopy commutative
	
	$$
	\small{\xymatrix{
			S^{4k-2}\ar[r]^{f_1}\ar[d]^{\iota_{4k-2}} 	&P^{2k}(n)\ar[d]^-{g} \ar[r]^-{i_1}&X_1\ar[r]^-{q_1}\ar[d]^{h}&S^{4k-1}\ar[d]^{\iota_{4k-1}} \\
			S^{4k-2}\ar[r]^{f_2}&	P^{2k}(n) \ar[r]^-{i_2}& X_2 \ar[r]^{q_2} & S^{4k-1}
	} }. 
	$$
	
	A simple diagram chasing shows that $h_\ast (\sigma_{f_1})=\sigma_{f_2}$, so $h$ is an orientation preserving homotopy equivalence.
	
\end{proof}

\begin{remark}\label{rem:orientation}~~
	
	\begin{enumerate}[(i)]
		\item It is clear that in Proposition \ref{Prop: orient hty equi of X},   $X_{1}$ and $X_{2}$ are orientation reversing homotopy equivalent if and only if 
		there exists a  $g\in Aut(P^{2k}(n))$ such that $g f_1 = -f_2$.
		\item The orientation preserving homotopy classification of the total spaces of $S^{2k-1}$-fibrations over $S^{2k}$ reduces to that of oriented CW complexes  $(X_f,\sigma_f)$ with $X_f=P^{2k}(n)\cup_{f} e^{4k-1},f\in I_k^n$.
	\end{enumerate}

\end{remark}
For  $f,f'\in \pi_{4k-2}(\Pn)$,  denote $f\sim f'$ if $	\Pn\cup_{f}e^{4k-1}\simeq \Pn\cup_{f'}e^{4k-1}$. Otherwise,  denote $f\nsim f'$. Clearly, $f\sim -f$ and  ``$\sim$'' defines an equivalence relation  on the set $I_k^n$.  Let $I_{k/\sim}^n$ denote the set of the equivalence classes, and  $[f]$ the class containing $f$.

 Let  $\iota_P$ (simplifying $\iota^{k+1}_P$) denote the identity map on Moore space $P^{k+1}(n)$.  By Corollary 1.4.10 of \cite{Baues} and Lemma (5) of \cite{SASAO2}, 
$$[P^{k+1}(n), P^{k+1}(n)]=\left\{
\begin{array}{ll}
	\Z_n\{\iota_P\}, & \hbox{$n$  odd;} \\
	\Z_{2n}\{\iota_P\},~\text{with}~i_k\eta_k p_{k+1}=n\iota_P,& \hbox{$ 2||n$;}
	\\
	\Z_n\{\iota_P\}\oplus \Z_2\{i_k\eta_k p_{k+1} \}, & \hbox{$4|n$.}	
\end{array}
\right.$$
From Lemma (7) of \cite{SASAO2}, 
$$Aut(P^{k+1}(n))=\left\{
\begin{array}{ll}
	\{t\iota_P | t\in \Z^{\ast}_n\}, & \hbox{$n$  odd;} \\
		\{t\iota_P | t\in \Z^{\ast}_{2n}\},~\text{with}~i_k\eta_k p_{k+1}=n\iota_P, & \hbox{$ 2||n$;}
	\\
	\{ t\iota_P+\epsilon i_k\eta_k p_{k+1}~|~t\in \Z_{n}^{\ast},\epsilon\in \{0,1\}\}, & \hbox{$4|n$.}	
\end{array}
\right.$$

	For $k=2,4$, \cite[Theorem (4)]{SASAO2} provides  $\theta_k^{n}\in K_k^n$  with  $j_{\ast}(\theta_k^{n})=[X_{2k},\iota_{2k-1}]_r$  satisfying  
	\begin{align}
		(2\iota_P) \theta_k^{n}=4\theta_k^{n}~ (k=2,4) \label{equ: 2theta_k^n k=2,4}.
	\end{align}

 \begin{lemma}\label{Lem: composition any k}
 	
  For $P^{2k}(n)$,	 $ t$ an integer such that $(t,n)=1$,  $\xi\in \pi_{4k-2}(S^{2k-1})$,the followings are true:
 
 	\begin{enumerate}[(i)]
 		\item \label{comp any k}~$(t\iota_P) (\theta_k^{n}+i_{2k-1}\xi)=t^2\theta_k^{n}+i_{2k-1}\xi''$ for some $\xi''\in \pi_{4k-2}(S^{2k-1})$.
 		\item \label{comp k=2,4}~If  $k=2,4$ and $\theta_k^{n}$ satisfies  (\ref{equ: 2theta_k^n k=2,4}), then 
 	\begin{align*}
 		(t\iota_P+\epsilon i_{2k-1}\eta_{2k-1} p_{2k}) (\theta_k^{n}+i_{2k-1}\xi)=t^2\theta_k^{n}+ti_{2k-1}\xi,~~ \xi\in\pi_{4k-2}(S^{2k-1}). 
 	\end{align*}
 	\end{enumerate}
\end{lemma}
\begin{proof}
	Lemma \ref{Lem: composition any k}  (\ref{comp any k}) is from the following equations 
	\begin{align*}
		j_{\ast}((t\iota_P) (\theta_k^{n}+i_{2k-1}\xi))=(t\iota_P)_{\ast}j_{\ast}(\theta_k^{n}+i_{2k-1}\xi)=(t\iota_P)_{\ast}([X_{2k},\iota_{2k-1}]_r)=t^2[X_{2k},\iota_{2k-1}]_r.  
	\end{align*}
Substituting condition  (\ref{equ: 2theta_k^n k=2,4})  into the proof of Theorem (4) in \cite{SASAO2}  readily yields Lemma \ref{Lem: composition any k}  (\ref{comp k=2,4}). 
\end{proof}

 \section{Classification of $S^{2k-1}$-bundles over $S^{2k}$ by characteristic maps for $k=2,4$}
 \label{sec:S7-bundle over S8}
 In this section, we always assume $k=2$, $4$.  By the definition,  
 \begin{align}
 	&\mathbf{i}_{\ast}(\bar \rho_{2k-1})=\bar \rho_{2k}, ~~~~~~~~~	\mathbf{p}_{\ast}(\bar\sigma_{2k})=\iota_{2k-1}. \nonumber\\
 	\text{Hence,}~~~~&	\mathbf{i}_{\ast}(m\bar \rho_{2k-1})=[m,0] ~~\text{and}~~	\mathbf{p}_{\ast}([m,n])=n\iota_{2k-1}.  \nonumber\\
 	     \text{Clearly,}~~~~    &  	[m,n]-[m',n]=(m-m')\mathbf{i}_{\ast}(\bar \rho_{2k-1}).  \label{equ:(m,n)-(m',n)}
 \end{align}
 From \cite[Section 3]{JamesI},  $M^{4k-1}_{m,n}$ has a cell decomposition
\begin{align}
	M^{4k-1}_{m,n}=P^{2k}(n)\cup_{f_{m,n}} e^{4k-1}  \label{Cell decom of M}
\end{align}  
 where the attaching map  $f_{m,n}$  of the top cell of $M^{4k-1}_{m,n}$ is defined in \cite[Section 3]{JamesI}. Then 
 we have the following
 
 \begin{proposition}\label{pro:fm,n}~ 
 	
 	\begin{enumerate}[(i)]
 	\item \label{condition fmn:i} Given any generator  $X_{2k}\in \pi_{2k}(\Pn, S^{2k-1})$,   there is a $g^n_{m,m'}\in Aut(P^{2k}(n))$  such that 
 	\begin{align*}
 			(g^n_{m,m'})_{\ast}([X_{2k},\iota_{2k-1}]_r)=[X_{2k},\iota_{2k-1}]_r~~\text{and}~~g^n_{m,m'} f_{m,n}-f_{m',n}=  (m-m')i_{2k-1} \xi_{2k-1}.
 	\end{align*}
 	\item \label{condition fmn:ii} There is a generator $X_{2k}\in \pi_{2k}(\Pn, S^{2k-1})$ such that  $j_{\ast}(f_{m,n})=[X_{2k},\iota_{2k-1}]_r$   for any integer $m$. 
 \end{enumerate}
 \end{proposition}

 \begin{proof}
 From the \cite[page 156]{JamesII},  there is  a $g^n_{m,m'}\in Aut(P^{2k}(n))$ such that the restriction to $S^{2k-1}$ is the identity map and $(g^n_{m,m'})_{\ast}(X_{2k})=X_{2k}$. The first equation in (\ref{condition fmn:i})  holds by  the naturality of the relative Whitehead product, and  the second follows from \cite[(3.3)]{JamesII}.  For  (\ref{condition fmn:ii}),  there is a generator  $X_{2k}\in \pi_{2k}(\Pn, S^{2k-1})$ such that $j_{\ast}(f_{0,n})=[X_{2k},\iota_{2k-1}]_r$  by \cite[(5.1)]{JamesII}. For any integer $m\neq 0$,  consider $g_{0,m}$ in (\ref{condition fmn:i}) of Proposition \ref{pro:fm,n}, we  also have  $j_{\ast}(f_{m,n})=[X_{2k},\iota_{2k-1}]_r.$ 
 \end{proof}

 \begin{lemma}\label{Lem:orien of M}
 	Let $X_{2k}$ be the  generator in (\ref{condition fmn:ii} ) of Proposition \ref{pro:fm,n}.  Then there exists a generator $\tilde \iota_{4k-1}\in H_{4k-1}(S^{4k-1})$ such that  the orientation class $\sigma^{4k-1}_{m,n}$
 	 of $M_{m,n}^{4k-1}$  defined by the orientations of base space $S^{2k}$ and fibre $S^{2k-1}$,  is $\tilde{\iota}_{4k-1}$-compatible for any $m,n$. 
 \end{lemma}  
 \begin{proof}
 	Let $a^{t}$ be the base point of $S^t$ and $u_t:D^t\rightarrow S^t$  the pinch map that collapses the boundary of the disk to the basepoint. 
 	
 	There is a map $h:D^{2k}\times S^{2k-1}\rightarrow M_{m,n}^{4k-1}$  defined in \cite[(3.2)]{JamesI} with the following  properties
 		\begin{enumerate}[(i)]
 			\item \label{topcell i}  $u_{2k}=\pi (h|_{D^{2k}\times \{y\}}):  D^{2k}\hookrightarrow D^{2k}\times S^{2k-1}\xrightarrow{h} M_{m,n}^{4k-1}\xrightarrow{\pi}  S^{2k}$ for any $y\in S^{2k-1} $;
 			\item \label{topcell ii} $h|_{x}:\{x\}\times S^{2k-1}\rightarrow \pi^{-1}(u_{2k}(x))$  for any $ x\in D^{2k}$ is a homeomorphism  and the orientation of the each fibre  $\pi^{-1}(u_{2k}(x))$ of $M_{m,n}^{4k-1}$ is $(h|_{x})_{\ast}(\iota_{2k-1})$; 
 			\item \label{topcell iii} 
 			$h|_{res}: D^{2k}\times S^{2k-1}\setminus S^{2k-1}\times S^{2k-1} \rightarrow  M_{m,n}^{4k-1}\setminus  \pi^{-1}(a^{2k})$ is a homeomorphism.
 		\end{enumerate}
 		Thus there is a pushout diagram 
 		\begin{align*}
 				\xymatrix{
 				 S^{2k-1}\times S^{2k-1}  \ar@{^{(}->}[d]\ar[r]^-{Pr_{2}} &S^{2k-1} \ar@{^{(}->}[d]^-{i}\\
 			D^{2k}\times S^{2k-1}  \ar[r]^-{h}  & M_{m,n}^{4k-1}}
 		\end{align*}
 		where $Pr_2$ is the projection to the second factor and the left map $i$   is the inclusion map taking $S^{2k-1}$  into $\pi^{-1}(a^{2k})\subset M_{m,n}^{4k-1}$. 
 		
 		So we have a  homeomorphism 
 		\begin{align*}
 			\tilde{h}:	(D^{2k}\times S^{2k-1})\cup_{\phi}S^{2k-1}\rightarrow M_{m,n}^{4k-1}, ~~\phi=h|_{S^{2k-1}\times S^{2k-1}}
 		\end{align*}
 		where $	(D^{2k}\times S^{2k-1})\cup_{\phi}S^{2k-1}=((D^{2k}\times S^{2k-1})\coprod S^{2k-1})/\sim$,  $ (x,y)\sim y'$ if and only if $x\in S^{2k-1}$ and $\phi(x,y)=i(y')$.  
 	
 	The orientations chosen on   $S^{2k-1}$ and  $S^{2k}$	determine the orientations on $D^{2k}\times S^{2k-1}$,  $(D^{2k}\times S^{2k-1})\cup_{\phi}S^{2k-1}$ and   $M_{m,n}^{4k-1}$ (denoted by $\sigma^{4k-1}_{m,n}$) such that the  maps in the following  commutative triangle are orientation preserving
 	\begin{align*}
 	\xymatrix{
 		D^{2k}\times S^{2k-1}  \ar@{^{(}->}[d]\ar[rd]^-{h}& \\
 		(D^{2k}\times S^{2k-1})\cup_{\phi}S^{2k-1} \ar[r]^-{\tilde{h}}  & M_{m,n}^{4k-1}} 
 \end{align*}
From \cite[(3.3)]{JamesI}, in the cell decompsition (\ref{Cell decom of M}), 
 \begin{align*}
 	& e^{4k-1}= h( D^{2k}\times S^{2k-1}\setminus ( D^{2k}\times\{a^{2k-1}\}\cup S^{2k-1}\times S^{2k-1})),\\
 	& P^{2k}(n)= h( D^{2k}\times\{a^{2k-1}\}\cup S^{2k-1}\times S^{2k-1}).
 \end{align*}
 This implies that $\tilde{h}$ induces the following commutative diagram 
 	\begin{align}
 		~~~~~~~	\xymatrix{
 			(D^{2k}\times S^{2k-1})\cup_{\phi}S^{2k-1}  \ar[d]^-{q'} \ar[r]^-{\tilde{h}}& M_{m,n}^{4k-1}\ar[ld]^-{q} \\
 			S^{4k-1}=(D^{2k}\times S^{2k-1})/(D^{2k}\times \{a^{2k-1}\} \cup S^{2k-1}\times S^{2k-1} ) & } \label{diam: orient triangle}
 	\end{align}	
 	where $q$ is the natural quotient map that collapses the $2k$-skeleton of  $M_{m,n}^{4k-1}$ to a point.
 	
 	Thus there is an orientation $\tilde \iota_{4k-1}\in H_{4k-1}(S^{4k-1})$ of $S^{4k-1}$, which is independent of $m$ and $n$,  such that $q'$ is orientation preserving.  From commutative triangle (\ref{diam: orient triangle}), we get $q_{\ast}(\sigma^{4k-1}_{m,n})=\tilde \iota_{4k-1}$, i.e.,  $\sigma^{4k-1}_{m,n}$ is  $\tilde{\iota}_{4k-1}$-compatible. 
 
 \end{proof}

 \begin{lemma} \label{lem:Mmn =M-mn}
  For any $m,n\in \Z$, $M^{4k-1}_{m,n}$ is  orientation preserving diffeomorphic to $M^{4k-1}_{-m-n,n}$, $k=2,4$.
\end{lemma}
\begin{proof}
For $k=4$, this lemma follows from \cite[Corollary 2.18]{S8bundles};  the same proof, with the dimensions of spheres and disks changed accordingly,    also applies to the case $k=2$. 
\end{proof}

\begin{lemma}\label{lemma n=2, k=2,4}
If $n=2$,  there is only one homotopy type $M_{m,2}^{4k-1}$ for $k=2,4$.  
\end{lemma}
\begin{proof}
If $n=2$,  by Lemma \ref{Lem:htpygp of K2,4},    $2\theta_k^2=i_{2k-1}\xi_{2k-1}$ and  $M_{m,2}^{4k-1}\simeq P^{2k}(2)\cup_f e^{4k-1}$ with $f\in \{\theta_k^{2}, \theta_k^{2}+i_{2k-1}\xi_{2k-1} \}$ where $\theta_{k}^2\in K_k^2$ is from  
(\ref{equ: 2theta_k^n k=2,4}). By
 $(-\iota_P)(\theta_k^{2}+i_{2k-1}\xi_{2k-1})=-\theta_k^{2}+i_{2k-1}\xi_{2k-1}=\theta_k^{2}$, we get $\theta_k^{2}+i_{2k-1}\xi_{2k-1}\sim \theta_k^{2}$.  Thus for all $m$, the homotopy type of  $M_{m,2}^{4k-1}$ is the same.
\end{proof}

Let  $\theta_{k}^n\in K_k^n$ be given in (\ref{equ: 2theta_k^n k=2,4}).  Let manifolds $M=P^{2k}(n)\cup_f e^{4k-1}$ with  $f=\theta_k^{n}+mi_{2k-1} \xi_{2k-1}$ and $M'=P^{2k}(n)\cup_{f'} e^{4k-1}$ with $f'=\theta_{k}^{n}+m'i_{2k-1} \xi_{2k-1}$.

\begin{lemma}\label{lem: equi relation}
  Let $k=2,4$.	If $n\neq 2$, then
	\begin{enumerate}[(I)]
		\item \label{lem:t orien}  $M$ and $M'$ are  orientation preserving homotopy equivalent  if and only if  there is an integer $t$ such that $m'\equiv tm+\frac{n(n-1)}{2}\varepsilon  ~\text{(mod $(n_{2k-1}, n)$)}$  and 
		\begin{enumerate}[(1)]	
			\item \label{(I)k=2}  for $k=2$, 	$\left\{
				\begin{array}{ll}
					t^2\equiv 1 ~\text{(mod $2n$)}, &\! \hbox{if $2||n$ or $4||n$;} \\
					t^2\equiv 1 ~\text{(mod $n$)}, &\! \hbox{if  $2\nmid n$ or $8|n$,} 
				\end{array}
				\right.  $
			\item \label{(I)k=4}  for $k=4$, 	$\left\{
			\begin{array}{ll}
				t^2\equiv 1 ~\text{(mod $2n$)}, &\! \hbox{if $2||n$ or $4||n$;} \\
				t^2\equiv 1 ~\text{(mod $n$)}, &\! \hbox{ if  $2\nmid n$ or $16|n$;} \\
				t^2\equiv 1 ~\text{(mod $2n$)~or}~, t^2\equiv 1+n ~\text{(mod $2n$)}, &\! \hbox{if  $8||n$,}
			\end{array}
			\right.  $
			\end{enumerate}
			where \(\epsilon=1\) in the case \(k=4\) and \(8||n\) with  $t^2\equiv 1+n ~\text{(mod $2n$)}$; otherwise \(\epsilon=0\).
			
			\item \label{lem:t unorien}  $M$ and $M'$ are orientation reversing homotopy equivalent  if and only if  $n$ satisfies $\bigstar$ and  there is an integer $t$ such that  $-m'\equiv tm+\frac{n(n-1)}{2}\varepsilon ~\text{(mod $(n_{2k-1}, n)$)}$   and 
			\newline
	  	$\left\{
			\begin{array}{ll}
			\varepsilon=0, 	t^2\equiv -1 ~\text{(mod $2n$)}, &\! \hbox{if $2\nmid n$;} \\
				\varepsilon=1,	t^2\equiv -1+n ~\text{(mod $2n$)}, &\! \hbox{if $2||n$.} 
			\end{array}
			\right.$  
			\newline
	\end{enumerate}
\end{lemma}
\begin{proof}
From  Lemma \ref{Lem: composition any k} (\ref{comp k=2,4}), $M\simeq M'$ if and only if there exists $t\iota_P+\epsilon i_{2k-1} \eta_{2k-1} p_{2k}$ with $(t,n)=1$ (take $\epsilon=0$ when $n$ is odd or $2||n$), such that 
	\begin{align}
		(t\iota_P\!+\!\epsilon i_{2k\!-\!1}\eta_{2k\!-\!1} p_{2k}) (\theta_k^{n}\!+\!mi_{2k-1}\xi_{2k\!-\!1})\!=\!t^2\theta_k^{n}\!+\!tmi_{2k\!-\!1}\xi_{2k\!-\!1}\!=\!\pm ( \theta_k^{n}\!+\!m'i_{2k\!-\!1}\xi_{2k\!-\!1}) \label{equ: tlP compose k=2,4}
	\end{align}
	where the sign ``$+$" and  $``-"$ on the right-hand side of the above equations  represent that the homotopy equivalence on total space is orientation preserving and reversing respectively.
	
Using $K_k^n$ in Lemma \ref{Lem:htpygp of K2,4}, the conditions of $t$ in (\ref{lem:t orien}) follow from   Lemma \ref{Lem: Number theory x^2=1+n 8|n},  and  those in (\ref{lem:t unorien}) follow from  Lemma \ref{Lem: Number theory x2=pm 1}(\ref{NumberTheo x^2=-1}). 
\end{proof}

\begin{lemma}\label{lem: change theta to f0,n}
   If we replace the $\theta_{k}^n$ by $f_{0,n}$ in the definition of manifolds $M$ and $M'$, then Lemma \ref{lem: equi relation}  still holds.
\end{lemma}
\begin{proof}
	~~
	Assume that $f_{0,n}=\theta_{k}^n+a_k^ni_{2k-1} \xi_{2k-1}\in I_{k}^n$. Note that 
	$ni_{2k-1}\xi_{2k-1}=0$ by Lemma \ref{Lem:htpygp of K2,4}. 
Lemma \ref{lem:Mmn =M-mn} implies that $f_{m,n}\sim f_{-m-n,n}$, hence $g^n_{m,0} f_{m,n}\sim g^n_{-m-n,0} f_{-m-n,n}$ in $I_{k}^n$. 
By Proposition \ref{pro:fm,n} (\ref{condition fmn:i}),  we get the following equivalence (orientation preserving )
\begin{align*}
	&f_{0,n}+mi_{2k-1} \xi_{2k-1}\sim f_{0,n}+(-m-n)i_{2k-1} \xi_{2k-1}=f_{0,n}-mi_{2k-1} \xi_{2k-1} \\
	\Rightarrow~~ & \theta_{k}^n+(a_k^n+m)i_{2k-1} \xi_{2k-1}\sim 	\theta_{k}^n+(a_k^n-m)i_{2k-1} \xi_{2k-1}.
\end{align*}
In the following, we only prove this lemma for the case $k=4$ since the proof of that for the case $k=2$ is similar and easier. 

Lemma \ref{lem: equi relation} (\ref{lem:t orien}),   we get that for any $m\in \Z$, 
\begin{align*}
	a_4^n-m=t(a_4^n+m)+\frac{n(n-1)}{2}\varepsilon  ~\text{(mod $(120, n)$)},
\end{align*}
where $\varepsilon\in \{0,1\}$ for $8||n$, otherwise $\varepsilon=0$. 

Then from   Lemma \ref{Lem: Number theory x2=pm 1} (\ref{NumberTheo x^2=1}) (\ref{NumberTheo x^2=1 solution}),   we get that for any $m\in \Z$, 
\begin{align}
  & 4\varepsilon[1+(a_4^n+m)]\pm(a_4^n+m)\equiv (a_4^n-m) ~\text{(mod $(8,n)$)}; \label{equ:a4n+m for mod(8 n)}\\
& \pm(a_4^n+m) \!\equiv \! a_4^n-m ~\text{(mod $(3,n)$)} ~\text{and}~		\pm(a_4^n+m) \!\equiv \!a_4^n-m~\text{(mod $(5,n)$)}; \label{equ:a4n+m for mod(5 n)}.
\end{align}

By the arbitrariness of the integer $m$, from (\ref{equ:a4n+m for mod(5 n)}), it is evident that
\begin{align*}
	2a_4^n \equiv 0 ~\text{(mod $(3,n)$)},   ~~\text{and}~ 2a_4^n \equiv 0 ~\text{(mod $(5,n)$)}.
\end{align*}
If we  take $m\in \Z$ such that 
$a_4^n+m$ is odd and $(8,n)\nmid m$ in (\ref{equ:a4n+m for mod(8 n)}),  we also obtain
\begin{align*}
	2a_4^n \equiv 0 ~\text{(mod $(8,n)$)}. 
\end{align*}
Thus $f_{0,n}=\theta_{4}^n+a_{4}^ni_{7}\xi_{7}$ with  	$2a_4^n \equiv 0 ~\text{(mod $(120,n)$)}$.   So
 \begin{align*}
 	(2\iota_P)f_{0,n}=&	(2\iota_P)\theta_{4}^n+(2\iota_P)(a_{4}^ni_{7}\xi_{7})
 	\\
	=&4\theta_{4}^n	+2a_{4}^ni_{7}\xi_{7},~~\text{by \cite[Theorem (2)]{SASAO2}}\\
	=&4\theta_{4}^n\\
	=&4f_{0,n}. 
\end{align*}
Hence $f_{0,n}\in K_{4}^n$ also satisfies the equation  (\ref{equ: 2theta_k^n k=2,4}).
Thus we can take  $\theta_4^{n}=f_{0,n}$.

\end{proof}

\begin{proof}[Proof of Theorem \ref{thm:S3S4 bundle}, \ref{thm:S7S8 bundle} and Corollary \ref{Cor:Gn for k=2,4}]
	~~
	
 For any fixed integer $n$,  take  $\theta_k^{n}=f_{0,n}$. 
 
 For any $m,m'\in \Z$,  from  Proposition \ref{pro:fm,n} (\ref{condition fmn:i}), 
 \begin{align*}
 	f_{m,n}\sim f_{m',n}\in I_k^n~~\text{if and only if}~~\theta_k^{n}+mi_{2k-1}\xi_{2k-1}\sim  \theta_k^{n}+m'i_{2k-1}\xi_{2k-1} \in I_k^n. 
 \end{align*}
 Then  Part (I) of both Theorem \ref{thm:S3S4 bundle} and \ref{thm:S7S8 bundle} are obtained by  Lemma \ref{lem: equi relation} (\ref{lem:t orien}) and Lemma \ref{Lem: same equ for m m'};   Part (II) of both Theorem \ref{thm:S3S4 bundle} and \ref{thm:S7S8 bundle} are obtained by  Lemma \ref{lem: equi relation} (\ref{lem:t unorien}) and Lemma \ref{Lem: Number theory x2=pm 1} (\ref{NumberTheo x^2=-1}) (\ref{NumberTheo x^2=-1 5|n}).

\end{proof}

\section{Classification of $S^{2k-1}$-fibrations over $S^{2k}$ by attaching maps for $k=3,5,6$}
\label{sec:fibrations for k=3,5,6}

	Let $f_1=\theta_{k}^n+i_{2k-1}\xi_1$ and $f_2=\theta_{k}^n+i_{2k-1}\xi_2$ with $\xi_1,\xi_2\in \pi_{4k-2}(S^{2k-1})$.

First, we prove Theorem \ref{Thm: all odd k fibr}.
\begin{proof}[Proof of  Theorem \ref{Thm: all odd k fibr}]
	 Assume that there is an orientation reversing homotopy equivalence from $\Pn\cup_{f_1} e^{4k-1}$ to  $\Pn\cup_{f_2} e^{4k-1}$. Namely, there is $g=t\iota_P+\epsilon i_{2k-1}\eta_{2k-1}p_{2k}$ such that $g f_1=-f_2$. 
	
	From Lemma \ref{Lem: composition any k} (\ref{comp any k}), 
	\begin{align*}
		&(t\iota_P+\epsilon i_{2k-1}\eta_{2k-1} p_{2k})(\theta_{k}^n+i_{2k-1}\xi_1)=t^2\theta_{k}^n+i_{2k-1}\xi_1' ~~\text{for some}~\xi_1'\in \pi_{4k-2}(S^{2k-1}).  \\
		\text{Hence,}~& t^2\theta_{k}^n+i_{2k-1}\xi_1'=-\theta_{k}^n-i_{2k-1}\xi_2~\Rightarrow~(t^2+1)\theta_{k}^n+i_{2k-1}(\xi_1'+\xi_2)=0. 
	\end{align*}
	
	For even $k$  or $4|n$, by  Proposition \ref{Prop:htpygp of K neq2,4} (\ref{o(theat) otherwise}), we get $t^2+1\equiv 0$ (mod $2n$).  
	
	For  odd  $k$  and $2||n$, by Proposition \ref{Prop:htpygp of K neq2,4} (\ref{o(theat) odd k 2||n}),  $t^2+1\equiv 0$ (mod $4n$) or $t^2+1\equiv 2n$ (mod $4n$). These  also implies that $t^2+1\equiv 0$ (mod $2n$). 
	
	This contradicts  Lemma \ref{Lem: Number theory x2=pm 1}(\ref{NumberTheo x^2=-1}). 
\end{proof}

The following theorem from our previous work provides the classification theorem for the $S^{2k-1}$-fibration over $S^{2k}$  when $k\neq 2,4$.
\begin{theorem}[Theorem 1.6 of \cite{ZhuPanfiber}]\label{thm: x1=X2}
	Let $k\neq 2,4$ and   $X_i= \Pn\cup_{f_i}e^{4k-1}$ with $j_{\ast}(f_i)=[X_{2k},\iota_{2k-1}]_r$ ($i=1,2$) be homotopy equivalent to the total space of $S^{2k-1}$-fibration over $S^{2k}$. Then
	$X_1 \simeq X_2$ if and only if there is an integer $t$ such that  $t\Sigma^{\infty}f_1=\Sigma^{\infty}f_2$  with $t^2\equiv 1 $ (mod $2n$). 
\end{theorem}

\begin{proof}[Proof of Theorem \ref{Thm: Gkn}]\renewcommand{\qedsymbol}{}
	~~
	For $2|n$,  then
	\begin{align*}
		I_{k}^{n}=	\left\{
		\begin{array}{ll}
			\{\theta_3^{n}+bi_5\nu_5\eta_8^2~|~ b\in \Z_2\}, & \hbox{$k=3$;} \\
			\{\theta_5^{n}+i_9\bar\xi+bi_9\xi_0~|~ \bar\xi\in S_0,   b\in \Z_2\},& \hbox{$k=5$;}\\
			\{\theta_6^n\!+\!bi_{11}\zeta_{11}\!+\! c \rho_{3}^n i_{11} \alpha_{3}^{11}\!+\!e\rho_{7}^n i_{11} \alpha_{1}^{11}~|~ b\in \Z_{(8,n)},  c\in \Z_{(9,n)}, e\in \Z_{(7,n)}\},& \hbox{$k=6$.}	
		\end{array}
		\right.
	\end{align*}
	where $\xi_0=\nu_9^3+\eta_9\varepsilon_{10}+\sigma_9\eta_{16}^2$; $2n\theta_3^{n}=i_5\nu_5\eta_8^2$ and $2n\theta_5^{n}=i_9\xi_0$ for $2||n$; $2n\theta_5^{n}=0$ for $4|n$.
	
	Moreover, by \cite[Corollary 1.8]{ZhuPanfiber}, $\theta_k^{n}$ can be chosen such that 
	  \begin{align*}
	  	\Sigma^{\infty} \theta_k^n=0. 
	  \end{align*}
Hence,  from Theorem \ref{thm: x1=X2}, 
\begin{align}
	&  f_1=\theta_{k}^n+i_{2k-1}\xi_1\sim f_2=\theta_{k}^n+i_{2k-1}\xi_2   ~\text{in}~I_{k}^{n} 	\Leftrightarrow~~  ti_{2k-1}^s\Sigma^{\infty}\xi_1=i_{2k-1}^s\Sigma^{\infty}\xi_2 \label{iff f1 equ f2}
\end{align}
where  $t^2\equiv 1$ (mod $2n$) and  $i_{2k-1}^s=\Sigma^{\infty}i_{2k-1}$. 

$\bullet$  For $k=3$, by $\pi_{10}^{s}(S^5)=0$, we get 
\begin{align*}
	G_3^n=1~\text{and}~I_{3/\sim}^n=\{[\theta_{3}^n]\}.  
\end{align*}

$\bullet$  For $k=5$, by diagram (\ref{Diam: K,Ks}) and equation (\ref{equ:stable xi 0}), we get 
\begin{align*}
	G_5^n=8 ~\text{and}~ I_{5/\sim}^n=\{[\theta_{5}^n+i_9\bar\xi]~|~\bar\xi\in S_0\}.  
\end{align*}

$\bullet$  For $k=6$,  we have 
\begin{align*}
 	\pi_{22}(S^{11})/n\pi_{22}(S^{11})\xrightarrow[\cong]{\Sigma^{\infty}}	\pi^s_{22}(S^{11})/n\pi^s_{22}(S^{11}) \xrightarrow[\cong]{i_{11\ast}^s} K_{6}^{n,s}. 
\end{align*}
where the first isomorphism comes from \cite[Theorems 7.4, 13.4 and  13.9 ]{Toda}.

If we assume that $\xi_{i}=b_ii_{11}\zeta_{11}+ c_i \rho_{3}^n i_{11} \alpha_{3}^{11}+e_i\rho_{7}^n i_{11} \alpha_{1}^{11}\in \pi_{22}(S^{11}),  i=1,2$ in (\ref{iff f1 equ f2}),  then (\ref{iff f1 equ f2})  is equivalent to the following 
\begin{align*}
	tb_1 \equiv b_2 ~\text{(mod $(8,n)$)},
		tc_1 \equiv c_2 ~\text{(mod $(9,n)$)}, ~\text{and}~	te_1 \equiv e_2 ~\text{(mod $(7,n)$)}.
\end{align*}
By Lemma \ref{Lem: Number theory x2=pm 1} (\ref{NumberTheo x^2=1 solution}), that is 
\begin{align*}
b_1\equiv \pm b_2 ~\text{(mod $(8,n)$)},  c_1\equiv \pm  c_2  ~\text{(mod $(9,n)$)} ~\text{and}~ e_1\equiv \pm e_2~ \text{(mod $(7,n)$)}. 
\end{align*}
Let $U_n^3=\{0,1\}$ or $\{0,1,2,3,4\}$ according as $3||n$ or $9|n$. We get
\begin{itemize}
	\item for $2||n$,  $G_6^n=2(1+\rho_{3}^n+3\rho_{9}^n)(1+3\rho_{7}^n)$ and 
	$I_{6/\sim}^n=\{[\theta_6^n+bi_{11}\zeta_{11}+ c \rho_{3}^n i_{11} \alpha_{3}^{11}+e\rho_{7}^n i_{11} \alpha_{1}^{11}]~|~ b\in \Z_2,  c\in U_n^3, e\in\{ 0,1,2,3\}\}$; 
	\item  	for $4||n$,	$G_6^n=3(1+\rho_{3}^n+3\rho_{9}^n)(1+3\rho_{7}^n)$ and 
	$I_{6/\sim}^n=\{[\theta_6^n+bi_{11}\zeta_{11}+ c \rho_{3}^n i_{11} \alpha_{3}^{11}+e\rho_{7}^n i_{11} \alpha_{1}^{11}]~|~ b\in \{0,1,2\},  c\in U_n^3, e\in\{ 0,1,2,3\}\}$;
	\item  for $8|n$,  	$G_6^n=5(1+\rho_{3}^n+3\rho_{9}^n)(1+3\rho_{7}^n)$ and 
	$I_{6/\sim}^n=\{[\theta_6^n+bi_{11}\zeta_{11}+ c \rho_{3}^n i_{11} \alpha_{3}^{11}+e\rho_{7}^n i_{11} \alpha_{1}^{11}]~|~ b\in \{0,1,2,3,4\},  c\in U_n^3, e\in\{ 0,1,2,3\}\}$.   
\end{itemize} 

\end{proof}

 \qquad
 
\noindent
{\bf Acknowledgement.}
The first author was partially supported by National Natural Science Foundation of China (Grant No. 12571075); the second author was partially supported by National Natural Science Foundation of China (Grant No. 11971461 and  12571075).

\bibliographystyle{amsplain}

\end{document}